\documentclass[a4paper,11pt]{amsart}
\usepackage{dsfont,fancyhdr,hyperref,mathrsfs,amsmath,amscd,amsthm,amsfonts,latexsym,amssymb,stmaryrd}
\usepackage[all]{xy}

\voffset 3mm
\topmargin 10mm
\evensidemargin  5mm
\oddsidemargin  5mm
\textwidth  145mm
\textheight 205mm
\headsep 5mm
\marginparsep 2mm
\marginparwidth 20mm
\footskip 0mm
\headheight 5mm

\linespread{1.1}

\pagestyle{fancy}
\lhead[{\footnotesize \thepage}]{\footnotesize PROJECTIVE STRUCTURES AND $\r$-CONNECTIONS}
\chead[]{}
\rhead[\footnotesize Radu Pantilie]{\footnotesize \thepage}
\lfoot{}
\cfoot{}
\rfoot{}

\DeclareMathOperator{\trace}{trace}

\DeclareMathOperator{\Ric}{Ric}
\DeclareMathOperator{\dif}{d}

\renewcommand{\1}{\mathds{1}}

\newcommand{\ol}{\mathcal{O}}

\def \a{\alpha}

\def \b{\beta}

\def \G{\Gamma}
\def \l{\lambda}
\def \o{\omega}

\def \phi{\varphi}
\def \Phi{\varPhi}
\def \p{\pi}
\def \r{\rho}

\def \C{\mathbb{C}\,}

\def\widecheckg{g^{\hspace*{-2.5pt}\vbox to 5pt{\hbox to
0pt{\LARGE$\check{}$}}}\hspace*{2pt}}

\def\widecheckl{\lambda^{\hspace*{-3.5pt}\vbox to 8pt{\hbox to
0pt{\LARGE$\check{}$}}}\hspace*{2pt}}

\hyphenation{pro-duct}

\begin{document}

\title{Projective structures and $\r$-connections}
\author{Radu Pantilie} 
\thanks{} 
\email{\href{mailto:radu.pantilie@imar.ro}{radu.pantilie@imar.ro}}
\address{R.~Pantilie, Institutul de Matematic\u a ``Simion~Stoilow'' al Academiei Rom\^ane,
C.P. 1-764, 014700, Bucure\c sti, Rom\^ania}
\subjclass[2010]{53A20, 53B10, 53C56}
\keywords{complex projective structures, $\r$-connections}

\newtheorem{thm}{Theorem}[section]
\newtheorem{lem}[thm]{Lemma}
\newtheorem{cor}[thm]{Corollary}
\newtheorem{prop}[thm]{Proposition}

\theoremstyle{definition}

\newtheorem{defn}[thm]{Definition}
\newtheorem{rem}[thm]{Remark}
\newtheorem{exm}[thm]{Example}

\numberwithin{equation}{section}

\begin{abstract}
We extend T.~Y.~Thomas's approach to the projective structures, over the complex analytic category, by involving the $\r$-connections. 
This way, a better control of the projective flatness is obtained and, consequently, we have, for example, the following application: 
if the twistor space of a quaternionic manifold $P$ is endowed with a complex projective structure then $P$ can be locally identified, 
through quaternionic diffeomorphisms, with the quaternionic projective space. 
\end{abstract}

\maketitle
\thispagestyle{empty}
\vspace{-4mm} 
\begin{center}
\emph{This paper is dedicated to the 150th anniversary of the Romanian Academy.}
\end{center}

\section*{Introduction} 

\indent 
One of the problems with the Cartan connections approach to the projective structures is that there are much more `constant vector fields', on the corresponding 
principal bundle, than necessary to produce the geodesics. For example, the left invariant vector fields corresponding to nilpotent elements of degree $n\geq3$ 
of $\mathfrak{sl}(n)$ will, also, produce, for example, Veronese curves, on the corresponding projective space.\\ 
\indent 
Fortunately, in \cite{Tho-1926} (see \cite{Rob-proj}\,; see, also, \cite{CraSau-proj} for a nice review of projective structures in the smooth setting) 
it is, essentially, shown that any projective structure on a smooth manifold $M$ corresponds to an invariant Ricci flat torsion free connection 
on $\det(TM)$\,. However, the extension of this approach over the complex analytic category is nontrivial as, in this case, by \cite{At-57}\,, 
the relevant bundles (for example, the tautological line bundle over the complex projective space) can never be endowed with a connection.\\ 
\indent  
Such an extension has been carried over in \cite{MolMor-1996}\,, under the assumption that the canonical line bundle admits an $(n+1)$th root, 
where $n$ is the dimension of the manifold (see, also, \cite{Arm-proj_II} for an extension, of the T.~Y.~Thomas's approach, over odd dimensional complex manifolds).\\ 
\indent  
In this paper, we work out this extension by involving the $\r$-connections introduced in \cite{Pan-qgfs} (see Definition \ref{defn:ro-connection}\,, below). 
The obtained main result (Theorem \ref{thm:proj_ro}\,) then provides a surprisingly simple 
(and improved) characterisation of projective flatness (Corollary \ref{cor:proj_ro_flat}\,). From the applications, we mention, here, only the 
following: if the twistor space of a quaternionic manifold $P$ is endowed with a complex projective structure then $P$ can be locally identified, 
through quaternionic diffeomorphisms, with the quaternionic projective space.\\ 
\indent 
I am grateful to Ben~McKay for informing me about \cite{BisMcK-holo_Cartan} and \cite{MolMor-1996}\,.

\section{Complex projective structures and $\r$-connections} 

\indent 
In this paper, we work in the category of complex manifolds. (The corresponding extensions over the smooth category is easy to be dealt with.)\\ 
\indent 
Recall that two connections on a manifold are \emph{projectively equivalent} if and only if they have the same geodesics (up to parametrizations).\\ 
\indent 
The following two results are well known. For the reader's convenience, we sketch their proofs. 

\begin{prop} 
Let $M$ be a manifold endowed with a connection $\nabla$. Then there exists a torsion free connection on $M$ which is projectively equivalent 
to $\nabla$.  
\end{prop}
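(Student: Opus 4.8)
The plan is to exploit the standard fact that the difference of two connections is a tensor, and that projective equivalence only constrains the symmetric part of this tensor in a specific way. Concretely, write the torsion tensor of $\nabla$ as $T(X,Y) = \nabla_X Y - \nabla_Y X - [X,Y]$, which is skew-symmetric in $X,Y$. I would then simply set
\begin{equation*}
\widetilde{\nabla}_X Y = \nabla_X Y - \tfrac12\,T(X,Y).
\end{equation*}
The first step is to check that $\widetilde{\nabla}$ is again a connection: it is $\mathbb{C}$-linear in $X$, additive in $Y$, and satisfies the Leibniz rule, because $T$ is $C^\infty$-bilinear (so subtracting $\tfrac12 T(X,Y)$ does not disturb the Leibniz term, which only involves the $\nabla_X Y$ part). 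The second step is to compute the torsion of $\widetilde{\nabla}$ and see it vanishes: $\widetilde{T}(X,Y) = \widetilde{\nabla}_X Y - \widetilde{\nabla}_Y X - [X,Y] = T(X,Y) - \tfrac12 T(X,Y) + \tfrac12 T(Y,X) = T(X,Y) - T(X,Y) = 0$, using skew-symmetry of $T$.

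The third step is to verify that $\widetilde{\nabla}$ is projectively equivalent to $\nabla$, i.e.\ has the same geodesics up to reparametrization. Here I would observe that a curve $c$ is a geodesic of a connection precisely in terms of $\nabla_{\dot c}\dot c$ being proportional to $\dot c$, and that $\widetilde{\nabla}_{\dot c}\dot c = \nabla_{\dot c}\dot c - \tfrac12 T(\dot c,\dot c) = \nabla_{\dot c}\dot c$, since $T$ is skew. So in fact $\widetilde{\nabla}$ and $\nabla$ have literally the same geodesics with the same parametrizations, which is stronger than projective equivalence. Alternatively, one can phrase this via the fact that two connections are projectively equivalent iff their difference tensor has the form $(X,Y)\mapsto \theta(X)Y + \theta(Y)X$ for some $1$-form $\theta$; here the difference $-\tfrac12 T$ is skew, so it is of this form only with $\theta = 0$ — but projective equivalence is immediate from the geodesic computation anyway.

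There is essentially no serious obstacle here; the only point requiring a moment's care is the tensoriality of $T$ (to confirm $\widetilde\nabla$ is genuinely a connection and not merely a pointwise-defined gadget), and the remark that subtracting a $(1,2)$-tensor from a connection always yields a connection. Everything else is a one-line verification. I would present the argument in precisely the order above: define $\widetilde\nabla$, note it is a connection, compute its torsion is zero, and compute that geodesics are unchanged.
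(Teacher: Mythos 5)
Your proof is correct and is essentially the paper's argument in invariant form: the connection $\widetilde{\nabla}=\nabla-\tfrac12 T$ is exactly the average of $\nabla$ and the ``transposed'' connection with local forms $\Gamma^i_{kj}\dif\!x^k$ that the paper constructs, so the two proofs produce the same torsion free connection with literally the same geodesics.
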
 
\begin{proof} 
This follows quickly from the following two facts. Firstly, the set of connections on $M$, if nonempty, is an affine space over the space of sections 
of ${\rm End}(TM)$.\\ 
\indent 
Secondly, if $\bigl(\G^i_{jk}\dif\!x^k\bigr)_{i,j}$ are the local connection forms of a connection $\nabla$ on $M$, 
with respect to local charts $\bigl(x^i\bigr)_i$ on $M$  (that is, $\nabla_{\partial_k}\partial_j=\G^i_{jk}\partial_i$\,), then 
$\bigl(\G^i_{kj}\dif\!x^k\bigr)_{i,j}$ are the local connection forms of a connection on $M$. 
\end{proof}  

\begin{prop} \label{prop:proj_equiv} 
Let $\nabla$ and $\widetilde{\nabla}$ be torsion free connections on $M$. Then the following assertions are equivalent:\\ 
\indent 
{\rm (i)} $\nabla$ and $\widetilde{\nabla}$ are projectively equivalent.\\ 
\indent 
{\rm (ii)} There exists a one-form $\a$ on $M$ such that $\widetilde{\nabla}_XY=\nabla_XY+\a(X)Y+\a(Y)X$, 
for any local vector fields $X$ and $Y$ on $M$. 
\end{prop}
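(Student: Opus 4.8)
The plan is to study the difference tensor $D:=\widetilde{\nabla}-\nabla$. Since $\nabla$ and $\widetilde{\nabla}$ are torsion free, $D$ is a symmetric $(1,2)$-tensor field on $M$; that is, $D(X,Y)=D(Y,X)$, and $D$ is $C^{\infty}$-bilinear (holomorphic-bilinear) in its arguments. With this in hand, \emph{(ii)}$\,\Rightarrow\,$\emph{(i)} is immediate: a regular curve $c$ is, up to reparametrization, a geodesic of a connection if and only if its acceleration is at each point proportional to its velocity, and since $D(X,Y)=\a(X)Y+\a(Y)X$ gives $D(\dot c,\dot c)=2\,\a(\dot c)\,\dot c$, we get $\widetilde{\nabla}_{\dot c}\dot c=\nabla_{\dot c}\dot c+2\,\a(\dot c)\,\dot c$; hence $\nabla_{\dot c}\dot c\parallel\dot c$ if and only if $\widetilde{\nabla}_{\dot c}\dot c\parallel\dot c$, so $\nabla$ and $\widetilde{\nabla}$ have the same unparametrized geodesics.

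For \emph{(i)}$\,\Rightarrow\,$\emph{(ii)}, the same characterisation of unparametrized geodesics shows that \emph{(i)} is equivalent to the pointwise condition that $D(v,v)$ be proportional to $v$, for every $v\in T_{p}M$ and every $p\in M$. Thus the proposition reduces to the following linear algebra statement: \emph{if $V$ is a finite dimensional complex vector space and $D\colon V\times V\to V$ is symmetric bilinear with $D(v,v)\in\mathbb{C}v$ for all $v\in V$, then there is $\a\in V^{*}$ with $D(v,w)=\a(v)w+\a(w)v$.} Applied fibrewise to the tensor $D$, this produces a one-form $\a$; the trace computation below identifies $\a(v)=\tfrac1{n+1}\trace\!\bigl(w\mapsto D(v,w)\bigr)$, with $n=\dim M$, so $\a$ is smooth, resp.\ holomorphic.

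To prove the linear algebra statement, define $\psi\colon V\setminus\{0\}\to\mathbb{C}$ by $D(v,v)=\psi(v)\,v$; then $D(tv,tv)=t^{2}D(v,v)$ forces $\psi(tv)=t\,\psi(v)$. Polarizing $D(v,v)=\psi(v)v$ gives, for linearly independent $v,w$,
\[
2\,D(v,w)=\bigl[\psi(v+w)-\psi(v)\bigr]v+\bigl[\psi(v+w)-\psi(w)\bigr]w .
\]
Writing this same identity for the pair $(2v,w)$, using $D(2v,w)=2\,D(v,w)$ and $\psi(2v)=2\,\psi(v)$, and comparing, in the two resulting expressions for $4\,D(v,w)$, the coefficients of $v$ and of $w$, one is forced to $\psi(v+w)=\psi(v)+\psi(w)$. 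With the homogeneity of $\psi$ this gives additivity of $\psi$ on all of $V$, hence $\mathbb{C}$-linearity of $\psi$; substituting back, $2\,D(v,w)=\psi(w)v+\psi(v)w$, and $\a:=\tfrac12\psi$ works (the collinear case following from homogeneity). The trace identity $\trace\!\bigl(w\mapsto D(v,w)\bigr)=(n+1)\,\a(v)$ is then a direct check.

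The hard part is precisely the linearity of $\psi$ --- upgrading the weak hypothesis ``$v\mapsto D(v,v)$ takes values in the line $\mathbb{C}v$'' to the rigid form $D(v,w)=\a(v)w+\a(w)v$. The torsion free reduction to a symmetric tensor, the geodesic characterisation, the globalisation of $\a$, and the implication \emph{(ii)}$\,\Rightarrow\,$\emph{(i)} are all routine.
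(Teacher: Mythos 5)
Your proof is correct, and it follows the same overall strategy as the paper: both reduce the proposition to the pointwise linear--algebra lemma that a symmetric $(1,2)$-tensor $D$ with $D(v,v)\in\C v$ for all $v$ must have the form $D(u,v)=\a(u)v+\a(v)u$. Where you differ is in how that lemma is proved. The paper works in coordinates: it writes the proportionality condition as the polynomial identity $\G^{i_1}_{jk}x^jx^kx^{i_2}=\G^{i_2}_{jk}x^jx^kx^{i_1}$, compares coefficients to get $\G^i_{jk}=0$ for $j\neq i\neq k$, and then exhibits $\a$ explicitly in terms of the components $\G^i_{ij}$. You instead argue coordinate-freely, introducing $\psi$ with $D(v,v)=\psi(v)v$, extracting homogeneity from $D(tv,tv)=t^2D(v,v)$, and forcing additivity of $\psi$ by polarizing for the pairs $(v,w)$ and $(2v,w)$ and comparing coefficients of the linearly independent vectors $v,w$; I checked the two resulting expressions for $\psi(2v+w)$ and they do combine to give $\psi(v+w)=\psi(v)+\psi(w)$. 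Your route avoids indices at the cost of a slightly more delicate scaling trick, and your closing trace identity $\a(v)=\tfrac1{n+1}\trace\bigl(w\mapsto D(v,w)\bigr)$ settles the regularity of $\a$ explicitly, a point the paper leaves implicit (though its coordinate formula for $\a$ serves the same purpose). Both arguments are complete; the choice between them is a matter of taste.
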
 
\begin{proof} 
For this we only need the equivalence of the following facts, for a symmetric $(1,2)$ tensor $\G$ on a vector space $V$:\\ 
\indent 
\quad(1) $\G(v,v)$ is proportional to $v$, for any $v\in V$.\\ 
\indent 
\quad(2) There exists $\a\in V^*$ such that $\G(u,v)=\a(u)v+\a(v)u$\,, for any $u,v\in V$.\\ 
\indent 
Indeed, if $\dim V=1$ then this is obvious, whilst, if $\dim V\geq2$ and on assuming (1) then, 
for any $i_1,i_2=1,\ldots,\dim V$, we have $\G^{i_1}_{jk}x^jx^kx^{i_2}=\G^{i_2}_{jk}x^jx^kx^{i_1}$, where $\bigl(x^i\bigr)_i$ is any basis on $V^*$ 
and $x^i\circ \G=\G^i_{jk}x^jx^k$.\\ 
\indent 
Consequently, $\G^i_{jk}=0$ if $j\neq i\neq k$\,. Furthermore, with $i$ fixed, the one-form 
$\a=\frac12\bigl(\G^i_{ii}x^i+2\sum_{j\neq i}\G^i_{ij}x^j\bigr)$ is well defined 
(that is, it does not depend of $i$\,) and satisfies (2)\,, with $u=v$\,.   
\end{proof} 

\indent 
The following definition is, essentially, classical. 

\begin{defn} \label{defn:proj_str} 
A \emph{projective covering} on a manifold $M$ is a family $\bigl\{\nabla^U\bigr\}_{U\in\mathcal{U}}$\,, where:\\ 
\indent 
\quad(a) $\mathcal{U}$ is an open covering of $M$,\\  
\indent 
\quad(b) $\nabla^U$ is a torsion free connection on $U$, for any (nonempty) $U\in\mathcal{U}$,\\ 
\indent 
\quad(c) $\nabla^U$ and $\nabla^V$ are projectively equivalent on $U\cap V$, for any overlapping $U,V\in\mathcal{U}$.\\ 
\indent 
Two projective coverings are \emph{equivalent} if their union is a projective covering. A \emph{projective structure} is an equivalence class 
of projective coverings. 
\end{defn} 

\indent 
For any manifold $M$, endowed with a projective structure, there exists a representative of it $\bigl\{\nabla^U\bigr\}_{U\in\mathcal{U}}$ 
such that, for any $U\in\mathcal{U}$, the connection induced by $\nabla^U$ on $\det(TU)$ is flat; such a representative will be called \emph{special}. 
The existence of special representatives (an essentially known fact) is proved as follows. Let  
$\bigl\{\widetilde{\nabla}^U\bigr\}_{U\in\mathcal{U}}$ be any representative of the projective structure. 
By passing to a refinement of $\mathcal{U}$, if necessary, we may suppose that each $U\in\mathcal{U}$ is the domain of a frame field 
$\bigl(u_U^{\,1},\ldots,u_U^{\,n}\bigr)$ on $M$, over $U$, where $\dim M=n$\,. Let $\a_U$ be the local connection form, 
with respect to $u_U^{\,1}\wedge\ldots\wedge u_U^{\,n}$, of the connection induced by $\widetilde{\nabla}^U$ on $\det(TU)$\,. 
Let $\b_U=-\frac{1}{n+1}\,\a_U$ and $\nabla^U$ be given by $\nabla^U_{\,X}Y=\widetilde{\nabla}^U_{\,X}Y+\b_U(X)Y+\b_U(Y)X$, 
for any $U\in\mathcal{U}$ and any local vector fields $X$ and $Y$ on $U$. Then $\bigl\{\nabla^U\bigr\}_{U\in\mathcal{U}}$ is as required.\\  
 \indent 
Let $\bigl\{\nabla^U\bigr\}_{U\in\mathcal{U}}$ be a representative of a projective structure on $M$. For any overlapping $U,V\in\mathcal{U}$, 
denote by $\a_{UV}$ the one-form on $U\cap V$ which gives $\nabla^V-\nabla^U$, through Proposition \ref{prop:proj_equiv}\,. 
Then $\bigl(\a_{UV}\bigr)_{(U,V)\in\mathcal{U}^*}$ is a cocycle representing, up to a nonzero factor, the obstruction \cite{At-57} to the existence 
of a principal connection on $\det(TM)$\,, where $\mathcal{U}^*=\bigl\{(U,V)\in\mathcal{U}\times\mathcal{U}\,|\,U\cap V\neq\emptyset\bigr\}$\,.  
Recall that this can be defined as the obstruction to the splitting of the following exact sequence of vector bundles 
$$0\longrightarrow M\times\C\longrightarrow E\overset{\r}{\longrightarrow}TM\longrightarrow0\;,$$ 
where $E=\frac{T(\det(TM))}{\C\!\setminus\{0\}}$ and $\r:E\to TM$ is the projection induced by the differential of the projection $\det(TM)\to M$.\\  
\indent 
Let $L$ be a line bundle on $M$. Denote $E=\frac{T(L^*\setminus0)}{\C\!\setminus\{0\}}$\,, and $\r:E\to TM$ the projection.    
Recall that the sheaf of sections of $E$ is given by the sheaf of vector fields on $L^*\setminus0$ which are invariant under the action of $\C\!\setminus\{0\}$\,. 
Therefore to any local sections $s$ and $t$ of $E$ (defined over the same open set of $M$) we can associate their bracket $[s,t]$\,.  
Then $[\cdot,\cdot]$ is skew-symmetric, satisfies the Jacobi identity and $\r$ intertwines it and the usual Lie bracket on local vector fields on $M$.

\begin{rem} \label{rem:ro_power}  
Let $L$ be a line bundle on $M$ and denote $E=\frac{T(L^*\setminus0)}{\C\!\setminus\{0\}}$\,. 
If we replace $L$ by $L^n$, for some $n\in\mathbb{Z}\setminus\{0\}$\,, then in the exact sequence 
$0\longrightarrow M\times\C\overset{\iota}{\longrightarrow}E\overset{\r}{\longrightarrow}TM\longrightarrow0$\,, 
we just need to replace $\iota$ by $(1/n)\,\iota$\,. 
\end{rem} 

\indent 
If $F$ is a vector bundle over $M$ we denote by $\G(F)$ the corresponding sheaf of sections; that is, $\G(U,F)$ is the space of sections of $F$ over $U$, 
for any open set $U\subseteq M$.\\ 
\indent 
The following definition is taken from \cite{Pan-qgfs}\,. 

\begin{defn} \label{defn:ro-connection} 
1) Let $M$ be endowed with a vector bundle $E$, over it, and a morphism of vector bundles $\r:E\to TM$.\\ 
\indent  
If $F$ is a vector bundle over $M$ a \emph{$\r$-connection} on $F$ is a linear sheaf morphism $\nabla:\G(F)\to\G\bigl({\rm Hom}(E,F)\bigr)$ 
such that $\nabla_s(ft)=\r(s)(f)\,t+f\nabla_st$\,, for any local function $f$ on $M$, and any local sections $s$ of $E$ and $t$ of $F$.\\ 
\indent 
2) Suppose (for simplicity) that $\r:E\to TM$ is the projection, with $E=\frac{T(L^*\setminus0)}{\C\!\setminus\{0\}}$ and $L$ a line bundle over $M$. 
Then the \emph{curvature form} of a $\r$-connection $\nabla$ on $F$ is the section $R$ of ${\rm End(F)}\otimes\Lambda^2E^*$  given by 
$R(s_1,s_2)\,t=[\nabla_{s_1},\nabla_{s_2}]\,t-\nabla_{[s_1,s_2]}\,t$\,, for any local sections $s_1,s_2$ of $E$ and $t$ of $F$.\\ 
\indent 
If $\nabla$ is a $\r$-connection on $E$ then its \emph{torsion} is the section $T$ of $E\otimes\Lambda^2E^*$ given by 
$T(s_1,s_2)=\nabla_{s_1}s_2-\nabla_{s_2}s_1-[s_1,s_2]$\,, for any local sections $s_1,s_2$ of $E$. 
\end{defn}  

\begin{rem} \label{rem:ro_inv_classical} 
With the same notations as in Definition \ref{defn:ro-connection}(2)\,, if $L$ admits a (classical) connection then any $\r$-connection on $F$ 
corresponds to pair formed of a $\C\!\setminus\{0\}$ invariant connection on $\p^*F$, and a morphism of vector bundles 
from $M\times\C$ to ${\rm End}(F)$\,, where $\p:L\setminus0\to M$ is the projection. 
\end{rem} 

\indent 
Any (classical) connection $\nabla$ on $F$ defines a $\r$-connection $\widetilde{\nabla}$ given by $\widetilde{\nabla}_st=\nabla_{\r(s)}t$,  
for any local sections $s$ of $E$ and $t$ of $F$.\\ 
\indent 
However, not all $\r$-connections are obtained this way. For example, if a line bundle over $M$ admits a connection then 
its (first) Chern class with complex coefficients is zero, and the converse also holds if $M$ is compact K\"ahler \cite{At-57}\,. 
Nevertheless, any line bundle $L$ is endowed with a \emph{canonical} flat $\r$-connection $\nabla$, where $\r:E\to TM$ is the projection, 
with $E=\frac{T(L^*\setminus0)}{\C\!\setminus\{0\}}$\,. This can be defined as follows. 
Firsly, recall that any local section $s$ of $E$ over an open set $U\subseteq M$ can be seen as a $\C\!\setminus\{0\}$ invariant vector field 
on $L^*\setminus0$\,, whilst any section $t$ of $L$ over $U$ corresponds to a function $f_t$ on $\p^{-1}(U)$\,, where $\p:L^*\setminus0\to M$ is the projection.  
Then, by definition, $\nabla_st=s(f_t)$\,.\\ 
\indent 
For another example, let $V$ be a vector space and let $L$ be the dual of the tautological line bundle over the projective space $PV$. 
From $L^*\setminus0=V\setminus\{0\}$\,, it follows that $\frac{T(L^*\setminus0)}{\C\!\setminus\{0\}}=L\otimes\bigl(PV\times V)$\,. 
Thus, although $PV$ does not admit a connection, we can associate to it the \emph{canonical} flat $\r$-connection given by the tensor product 
of the canonical $\r$-connection of $L$ and the canonical flat connection on $PV\times V$. Note that, the canonical $\r$-connection 
of the projective space is torsion free.\\ 
\indent 
The following fact will be used later on. 

\begin{rem} \label{rem:diff_rational_map} 
Let $L$ be a line bundle over $M$ and let $V$ be a finite dimensional subspace of the space of sections of $L$. 
Then $V$ induces a section $s_V$ of $L\otimes V^*\bigl(={\rm Hom}(M\times V,L)\bigr)$ given by $s_V(x,s)=s_x$\,, 
for any $x\in M$ and $s\in V$. Obviously, the base point set $S_V$ of $V$ is equal to the zero set of $s_V$. 
Assume, for simplicity, that $S_V=\emptyset$\,.\\ 
\indent 
Then the differential of the corresponding map $\phi_V:M\to PV^*$ is induced by $\nabla s_V:E\to L\otimes V^*$, 
where $E=\frac{T(L^*\setminus0)}{\C\!\setminus\{0\}}$ and $\nabla$ is the tensor product 
of the canonical $\r$-connection of $L$ and the canonical flat connection on $M\times V^*$. 
This means that, if we, also, denote by $\dif\!\phi$ the morphism $TM\to\phi^*\bigl(T(PV^*)\bigr)$ corresponding to the differential of $\phi$\,, 
then $\dif\!\phi\circ\r=\r_V\circ(\nabla s_V)$\,, where $\r:E\to TM$ and $\r_V:L\otimes V^*\to\phi^*\bigl(T(PV^*)\bigr)$\,, 
are the projections.  
\end{rem} 

\newpage

\section{The main result on projective structures} 

\indent 
In this section, we prove the following result (cf.\ \cite{Tho-1926}\,, \cite{Rob-proj}\,, \cite{MolMor-1996}\,).  

\begin{thm} \label{thm:proj_ro}  
Let $M$ be a manifold, $\dim M=n\geq2$\,, denote $E=\frac{T(\det(TM))}{\C\!\setminus\{0\}}$ and let $\r:E\to TM$ be the projection.  
There exists a natural correspondence between the following:\\ 
\indent 
{\rm (i)} Projective structures on $M$.\\ 
\indent 
{\rm (ii)} Torsion free $\r$-connections $\nabla$ on $E$ satisfying:\\  
\indent 
\quad{\rm (ii1)} $\nabla_{\1}s=-\frac{1}{n+1}\,s$, for any local section $s$ of $E$, where 
$\1$ is the section of $E$ given by $x\mapsto(x,1)\in M\times\C\!\subseteq E$\,;\\  
\indent 
\quad{\rm (ii2)} The $\r$-connection induced by $\nabla$ on $\Lambda^{n+1}E$ corresponds, under the isomorphism $\Lambda^{n+1}E=\Lambda^n(TM)$\,, 
with the canonical $\r$-connection of $\Lambda^n(TM)$\,;\\ 
\indent 
\quad{\rm (ii3)} $\Ric=0$\,, where $\Ric(s_1,s_2)=\trace\bigl(t\mapsto R(t,s_2)s_1\bigr)$\,, for any $s_1,s_2\in E$, 
with $R$ the curvature form of $\nabla$.  
\end{thm}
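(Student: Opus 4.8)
The plan is to exhibit the correspondence explicitly in both directions and then check that the two constructions are mutually inverse. Given a projective structure, I would first pass to a \emph{special} representative $\bigl\{\nabla^U\bigr\}_{U\in\mathcal{U}}$ (whose existence was established in Section~1), so that on each $U$ the induced connection on $\det(TU)$ is flat. Over such a $U$, the flatness gives a local trivialization of $\det(TU)$, hence an identification $E|_U=(U\times\C)\oplus TU$ via a flat section, and Remark~\ref{rem:ro_inv_classical} lets me encode the data of a $\r$-connection on $E|_U$ as a pair: a $\C\!\setminus\{0\}$-invariant classical connection on the pulled-back bundle together with an endomorphism-valued one-form. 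Using $\nabla^U$ as the $TU$-part, I would write down the Thomas-type formula for $\nabla^U$ on $E|_U$ — this is the $\r$-connection analogue of the classical projective connection of T.~Y.~Thomas — building in (ii1) by decreeing $\nabla_{\1}s=-\tfrac1{n+1}s$, and arranging the mixed components from the Ricci tensor of $\nabla^U$ exactly as in \cite{Tho-1926}. The key point is that this local $\r$-connection does not depend on the chosen special representative: if $\nabla^V=\nabla^U+\a(X)Y+\a(Y)X$ with $\a=\a_{UV}$ the one-form of Proposition~\ref{prop:proj_equiv}, the classical Thomas computation shows the two local formulas glue, because the $\a$-dependence in the $TM$-block is cancelled by the compensating change in the $\C$-block coming from the change of flat trivialization of $\det(TM)$. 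So one gets a globally well-defined torsion free $\r$-connection $\nabla$ on $E$, and the classical identities of Thomas's construction give precisely (ii2) and (ii3).

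Conversely, given a torsion free $\r$-connection $\nabla$ on $E$ satisfying (ii1)--(ii3), I would produce a projective structure as follows. Cover $M$ by open sets $U$ on which $\det(TU)$ is trivial (equivalently, on which it carries a flat classical connection); by Remark~\ref{rem:ro_inv_classical}, over each such $U$ the $\r$-connection $\nabla$ corresponds to a genuine $\C\!\setminus\{0\}$-invariant connection on $\det(TU)$ together with an endomorphism-valued one-form, and I would reconstruct from the $TU$-block a classical connection $\nabla^U$ on $U$. Condition (ii1) guarantees that the $\1$-direction behaves as in Thomas's theory (so the construction only uses the geodesic-producing directions, which is the whole point of the paper), condition (ii2) pins down the $\det$-part so that $\nabla^U$ is independent of the auxiliary flat trivialization up to the projective ambiguity, and torsion-freeness of $\nabla$ forces $\nabla^U$ to be torsion free. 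On overlaps $U\cap V$ the two flat trivializations of $\det(TM)$ differ by a nowhere-zero holomorphic function, and tracking its logarithmic derivative through the formula shows that $\nabla^U$ and $\nabla^V$ differ by a term of the shape $\a(X)Y+\a(Y)X$, i.e.\ they are projectively equivalent by Proposition~\ref{prop:proj_equiv}; hence $\bigl\{\nabla^U\bigr\}$ is a projective covering, and a different choice of cover or of special representative yields an equivalent one. Condition (ii3), $\Ric=0$, is the normalization that makes this inverse well-defined and matches Thomas's Ricci-flatness.

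Finally I would verify that the two assignments are inverse to each other. One direction is essentially tautological: starting from a special representative, forming $\nabla$ on $E$, and then restricting over a trivializing $U$ and reading off the $TU$-block recovers $\nabla^U$ (up to the projective equivalence that is quotiented out anyway), because on $U$ the flat trivialization used in both constructions can be taken to be the same. The other direction requires that the local classical connections $\nabla^U$ extracted from a given $\nabla$, when reassembled by the Thomas formula, give back the \emph{same} $\r$-connection on all of $E$; this follows from uniqueness of the decomposition in Remark~\ref{rem:ro_inv_classical} together with (ii1)--(ii3), which between them determine every block ($\1$-component, $\det$-component via (ii2), and the Ricci-part via (ii3)) of $\nabla$.

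The main obstacle I anticipate is the global gluing and well-definedness argument, rather than any single local computation. Concretely, the delicate point is showing that the Thomas-type local formula for $\nabla$ on $E$ is genuinely independent of (a) the choice of special representative within the projective class and (b) the choice of flat trivialization of $\det(TM)$ on each chart — these two ambiguities interact, and one must check that the change in the $\a$-term of the connection is exactly compensated by the change in the identification $E|_U\cong(U\times\C)\oplus TU$, using Remark~\ref{rem:ro_power} to keep track of the factor $n+1$ coming from $\det(TM)$ versus its $(n+1)$st root. All of this is classical in the setting of \cite{Tho-1926}, \cite{Rob-proj}, \cite{MolMor-1996}; the new content is simply that the $\r$-connection formalism lets one carry it out without ever assuming that $\det(TM)$ (or a root of it) admits a classical connection, since the canonical flat $\r$-connection of $\Lambda^n(TM)$ always exists and supplies the normalization (ii2).
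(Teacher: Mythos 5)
Your proposal follows essentially the same route as the paper: the (i)$\to$(ii) direction is Thomas's formula written on $E|_U$ via the splitting $c_U$ determined by the flat connection on $\det(TU)$ of a special representative, glued over overlaps and checked against (ii1)--(ii3), and the converse extracts local torsion-free connections from the $TU$-component of $\nabla$. The one point to adjust is the role you assign to (ii2) and (ii3): the paper first defines the geodesics of $\nabla$ intrinsically (curves admitting a lift $s$ with $\nabla_ss=0$, produced via the frame-bundle description of $\rho$-connections), so that torsion-freeness and (ii1) alone already yield the projective structure and make the local connections $\nabla^U_XY=\rho(\nabla_XY)$ well defined up to projective equivalence, while (ii2) and (ii3) serve only to normalize $\nabla$ uniquely within the $\rho$-connections inducing a given projective structure.
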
 
\begin{proof} 
Suppose that $E$ is endowed with a torsion free $\r$-connection $\nabla$ such that, for any local section $s$ of $E$, we have $\nabla_{\1}s=-\frac{1}{n+1}\,s$\,. 
Then, also, $\nabla_s\1=-\frac{1}{n+1}\,s$\,, as $\nabla$ is torsion free and $[\1,s]=0$\,, for any local section $s$ of $E$.\\ 
\indent 
We define the geodesics of $\nabla$ to be those immersed curves $c$ in $M$ for which, locally, there exists a section $s$ of $E$, over $c$\,,  
such that $\r\circ s=\dot{c}$ and $\nabla_ss=0$ (compare \cite[Remark 1.1]{Pan-q_integrab}\,). 
Obviously, this does not depend of the parametrization of $c$ (as an immersion in $M$). Moreover, if $t$ is another section of $E$, over $c$\,, 
such that $\r\circ t=\dot{c}$ then $t=s+f\1$ for some function $f$, on the domain of $c$\,, and, consequently, $\nabla_tt=0$ if and only if $f=0$\,; 
that is, $s=t$\,.\\ 
\indent 
We shall show that for any $x\in M$ and any $X\in T_xM\setminus\{0\}$ there exists a curve $c$ on $M$ and a section $s$ 
of $E$, over $c$\,, such that $\dot{c}(0)=X$, $\r\circ s=\dot{c}$\,, and $\nabla_ss=0$\,; in particular, $c$ is a geodesic (in a neighbourhood of $x$).\\ 
\indent 
For this, let $V$ be the typical fibre of $E$ and let $\bigl(P,M,{\rm GL}(V)\bigr)$ be the frame bundle of $E$\,; denote by $\p:P\to M$ the projection. 
Then $\nabla$ corresponds \cite{Pan-qgfs} to an equivariant map $C:P\times V\to TP$ satisfying: 
\begin{equation} \label{e:principal_ro-connection} 
\begin{split} 
\dif\!\p\bigl(C(u,\xi)\bigr)&=\r(u\xi)\;,\\ 
C(ua,a^{-1}\xi)&=\dif\!R_a\bigl(C(u,\xi)\bigr)\;,\\ 
\end{split} 
\end{equation}  
for any $u\in P$, $a\in{\rm GL}(V)$ and $\xi\in V$, and where $R_a$ is the `(right) translation' on $P$ defined by $a$\,. 
Note that, similarly to the classical case, we have 
\begin{equation} \label{e:covariant_deriv_E} 
\nabla_{u\xi}s=u\,C(u,\xi)(f_s)\;, 
\end{equation} 
for any local section $s$ of $E$, any $u\in P$ such that $\p(u)$ is in the domain of $u$\,, and any $\xi\in V$, 
and where $f_s$ is the equivariant function on $P$ corresponding to $s$.\\ 
\indent 
For $\xi\in V$, we denote \cite{Pan-qgfs} by $C(\xi)$ the vector field on $P$ given by $u\mapsto C(u,\xi)$\,.\\ 
\indent 
Now, let $x\in M$ and $X\in T_xM\setminus\{0\}$\,. Choose $u_0\in P$ and $\xi\in V$ such that $\r(u_0\xi)=X$ and let $c$ be the projection, through $\p$\,, 
of the integral curve $u$ of $C(\xi)$ through $u_0$\,. Thus, if we denote $s=u\xi$\,, then the first relation of \eqref{e:principal_ro-connection} 
implies $\r\circ s=\dif\!\p(\dot{u})=\dot{c}$\,; in particular, $\dot{c}(0)=X$. Furthermore, by \eqref{e:covariant_deriv_E}\,, we have $\nabla_ss=u\,C(u,\xi)(\xi)=0$\,, 
where the second $\xi$ denotes the corresponding constant function along $u$\,.\\  
\indent 
To show that we have constructed, indeed, a projective structure, let $c_U:TU\to E|_U$ be the local section of $\r$ corresponding to a connection on 
$\det(TU)$\,, for some open set $U\subseteq M$ (note that, we may cover $M$ with such open sets $U$). 
Then $E|_U=TU\oplus(U\times\C\!)$\,, where we have identified $TU$ and the image of $c$\,; in particular, $\r|_U$ is just the projection 
from $E|_U$ onto $TU$. Let $\nabla^U$ be the (torsion free) connection on $U$ given by $\nabla^U_{\,X}Y=\r(\nabla_XY)$\,, for any 
local vector fields $X$ and $Y$ on $U$. Then if we intersect with $U$ any geodesic of $\nabla$ we obtain a geodesic of the projective structure on $U$, 
determined by $\nabla^U$.\\ 
\indent 
We have, thus, proved that any torsion free $\r$-connection $\nabla$ on $E$, satisfying the condition $\nabla_{\1}s=-\frac{1}{n+1}\,s$, 
for any local section $s$ of $E$, determines a projective structure on $M$.\\ 
\indent 
Conversely, suppose that $M$ is endowed with a projective structure given by the special projective covering $\bigl\{\nabla^U\bigr\}_{U\in\mathcal{U}}$\,.\\ 
\indent 
As $\nabla^U$ induces a flat connection on $\det(TU)$\,, it corresponds to a section $c_U$\,, over $U$, of $\r$\,; furthermore, 
$c\circ[X,Y]=[c\circ X,c\circ Y]$ for any local vector fields $X$ and $Y$ on $U$. Therefore there exists a unique $\b_U\in\G\bigl(E^*|_U\bigr)$ such that,  
for any $t\in E|_U$\,, we have $t=c_U\bigl(\r(t)\bigr)+\b_U(t)\1$\,.\\ 
\indent 
Let $U,V\in\mathcal{U}$\,, be such that $U\cap V\neq\emptyset$\,, and let $\a_{UV}$ be the one-form on $U\cap V$ given by Proposition \ref{prop:proj_equiv} 
applied to $\nabla^U|_{U\cap V}$ and $\nabla^V|_{U\cap V}$\,. Then, on $U\cap V$, we have $c_V=c_U-(n+1)\a_{UV}\1$\,; equivalently, 
$(n+1)\a_{UV}\bigl(\r(t)\bigr)=\b_V(t)-\b_U(t)$\,, for any $t\in U\cap V$.\\ 
\indent 
For any $U\in\mathcal{U}$, we define a $\r$-connection $\widetilde{\nabla}^U$ on $E|_U$ by 
$$\widetilde{\nabla}^U_{\,s}t=c_U\bigl(\nabla^U_{\r(s)}\bigl(\r(t)\bigr)-\tfrac{1}{n+1}\b_U(s)\r(t)-\tfrac{1}{n+1}\b_U(t)\r(s)\bigr) 
+\bigl(b_U(s,t)+\r(s)\bigl(\b_U(t)\bigr)\bigr)\1\;,$$ 
for any local sections $s$ and $t$ of $E|_U$\,, where $b_U$ is some section of $\odot^2E^*|_U$\,; consequently, 
\begin{equation*} 
\begin{split} 
\widetilde{\nabla}^U_{\,s}t-\widetilde{\nabla}^U_{\,t}s&=c_U\bigl([\r(s),\r(t)]\bigr)+\bigl(\r(s)\bigl(\b_U(t)\bigr)-\r(t)\bigl(\b_U(s)\bigr)\bigr)\1\\ 
&=\bigl[c_U\bigl(\r(s)\bigr),c_U\bigl(\r(t)\bigr)\bigr]+\bigl(\r(s)\bigl(\b_U(t)\bigr)-\r(t)\bigl(\b_U(s)\bigr)\bigr)\1=[s,t]\;,  
\end{split} 
\end{equation*} 
that is, $\widetilde{\nabla}^U$ is torsion free.\\ 
\indent 
Let $U\in\mathcal{U}$, and denote by $\Ric^U$ the Ricci tensor of $\nabla^U$ defined by $\Ric^U(X,Y)=\trace\bigl(Z\mapsto R^U(Z,Y)X\bigr)$\,, for any $X,Y\in TM$, 
where $R^U$ is the curvature form of $\nabla$. For $s,t\in E|_U$, we define 
$$b_U(s,t)=\tfrac{n+1}{n-1}\Ric^U\bigl(\r(s),\r(t)\bigr)-\tfrac{1}{n+1}\b_U(s)\b_U(t)\;.$$ 
Then a straightforward computation shows that $\widetilde{\nabla}^U|_{U\cap V}=\widetilde{\nabla}^V|_{U\cap V}$\,, for any 
$U,V\in\mathcal{U}$\,, with $U\cap V\neq\emptyset$\,. We have, thus, obtained a torsion free $\r$-connection $\nabla$ on $E$ 
which it is easy to prove that it satisfies (ii1)\,.\\ 
\indent 
Further, we may suppose that, for any $U\in\mathcal{U}$, there exists an $n$-form $\o_U$ on $U$ such that $\nabla^U\!\o_U=0$\,. 
Consequently, $\nabla_t\bigl(\r^*\o_U\bigr)=\frac{n}{n+1}\b_U(t)\,\r^*\o_U$\,, for any $t\in E|_U$\,.\\ 
\indent  
Note that, the isomorphism $\Lambda^n(T^*U)=\Lambda^{n+1}\bigl(E^*|_U\bigr)$ is expressed by $\o_U\mapsto\a\wedge\r^*\o_U$\,, where 
$\a$ is any local section of $E^*|_U$ such that $\a(\1)=1$\,. Also, (ii1) implies that, for any $t\in E|_U$\,, we have 
$\nabla_t\bigl(\a\wedge\r^*\o_U\bigr)=\b_U(t)\,\a\wedge\r^*\o_U$\,.\\ 
\indent 
On the other hand, the relation $t=c_U\bigl(\r(t)\bigr)+\b_U(t)\1$\,, for any $t\in E|_U$\,, means that $\b_U$ is the `difference' between 
the connection induced by $\nabla^U$ on $\Lambda^n(TU)$ and the canonical $\r$-connection $\overset{\rm can}{\nabla}$ on $\Lambda^n(TU)$\,;  
equivalently, $\overset{\rm can}{\nabla}_{\!t\,}\o_U=\b_U(t)\,\o_U$\,, for any $t\in E|_U$\,. Thus, $\nabla$ satisfies (ii2)\,.\\ 
\indent 
Finally, let $R$ be the curvature form of $\nabla$. Then a straightforward calculation shows that, on each $U\in\mathcal{U}$, we have 
\begin{equation} \label{e:proj_curvature} 
R=c_U\bigl(\r^*W^U\bigr)+\tfrac{n+1}{n-1}\bigl(\r^*C^U\bigr)\,\1\;, 
\end{equation} 
where $W^U$ and $C^U$ are the projective Weyl and Cotton-York tensors of $\nabla^U$, respectively, given by 
\begin{equation*} 
\begin{split} 
W^U(X,Y)Z&=R^U(X,Y)Z+\tfrac{1}{n-1}\bigl(\Ric^U(X,Z)Y-\Ric^U(Y,Z)X\bigr)\;,\\  
C^U(X,Y,Z)&=\bigl(\nabla^U_X\Ric^U\bigr)(Y,Z)-\bigl(\nabla^U_Y\Ric^U\bigr)(X,Z)\;, 
\end{split} 
\end{equation*} 
for any $X,Y,Z\in TU$. As \eqref{e:proj_curvature} implies (ii3)\,, the proof is complete.  
\end{proof} 

\begin{rem} \label{rem:proj_ro}
1) Suppose that, in Theorem \ref{thm:proj_ro}\,, there exists a line bundle $L$ such that $L^{n+1}=\Lambda^n(TM)$\,. Then we may replace 
$\det(TM)$ by $L^*\setminus0$\,, and, by Remark \ref{rem:ro_power}\,,  condition (ii1) becomes $\nabla_{\1}s=s$, for any local section $s$ of $E$, 
as satisfied by the canonical $\r$-connection of the projective space. Furthermore, the canonical $\r$-connection of the projective space, also, 
satisfies (ii2) and (ii3)\,, and the corresponding geodesics are the projective lines (as the `the second fundamental form', 
with respect to the canonical $\r$-connection, of any projective subspace is zero).\\  
\indent 
2) Condition (ii1) of Theorem \ref{thm:proj_ro} is necessary for $\nabla$ to be able to define a projective structure. Further, condition (ii2) 
fixes the `horizontal' part $\r\circ\nabla$ of $\nabla$ (among the torsion free $\r$-connections satisfying (ii1)\,), 
whilst (ii3) fixes the `vertical' part $\b_U\circ\nabla$, for $U\in\mathcal{U}$\,.       
\end{rem}

\section{Applications} 

\indent 
In this section, firstly, we explain how the well known characterisation of `projective flatness' can be improved by using our approach. 

\begin{cor} \label{cor:proj_ro_flat} 
Let $M$ be endowed with a projective structure, given by the torsion free $\r$-connection $\nabla$, and  
suppose that there exists a line bundle $L$ over $M$ such that $L^{n+1}=\Lambda^n(TM)$\,, where $\dim M=n\geq2$\,.\\ 
\indent 
Then $\nabla$ is flat if and only if there exists a (globally defined) local diffeomorphism from a covering space of $M$ to $\C\!P^n$ mapping 
the geodesics into projective lines.  
\end{cor}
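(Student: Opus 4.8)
The plan is to reduce the statement to the classical characterisation of projective flatness in terms of the vanishing of the Weyl and Cotton--York tensors, and then to build the developing map using the flat $\r$-connection. First I would record what ``$\nabla$ flat'' means concretely: by formula \eqref{e:proj_curvature} in the proof of Theorem \ref{thm:proj_ro}, on each $U\in\mathcal{U}$ the curvature of $\nabla$ decomposes as $R=c_U(\r^*W^U)+\tfrac{n+1}{n-1}(\r^*C^U)\1$, so $R\equiv0$ is equivalent to $W^U=0$ and $C^U=0$ for all $U$; when $n\geq3$ the vanishing of $W^U$ already forces $C^U=0$ by the contracted second Bianchi identity, and when $n=2$ one has $W^U\equiv0$ identically and the condition is exactly $C^U=0$. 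Thus $\nabla$ flat is precisely the classical notion of a projectively flat structure.

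Next I would produce the developing map. Pass to the universal cover $\widetilde M$ so that the line bundle $L$ pulls back, and note (Remark \ref{rem:proj_ro}) that after replacing $\det(TM)$ by $L^*\setminus0$ the $\r$-connection $\nabla$ on $E=\frac{T(L^*\setminus0)}{\C\setminus\{0\}}$ is a flat, torsion free $\r$-connection with $\nabla_{\1}s=s$. Flatness and simple connectivity give a trivialisation: there is a sheaf isomorphism $E\cong\widetilde M\times V$ (with $V=\C^{\,n+1}$ the typical fibre) identifying $\nabla$ with the canonical flat connection and the bracket on sections of $E$ with the bracket coming from this product, i.e.\ the frame bundle $P$ carries a global parallel section and the equation $C(\xi)$ from the theorem's proof integrates globally. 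Using condition (ii1) in the form $\nabla_{\1}s=s$ and $\nabla$ torsion free one checks that $\1$ corresponds, under this trivialisation, to the Euler (position) vector field of $V$; hence $E$ is, as a bundle with $\r$-connection, the pullback of the canonical $\r$-connection of $PV$ under a map $\phi:\widetilde M\to PV=\C\!P^n$. Concretely, the $(n+1)$ coordinate functions of the parallel trivialisation are sections of $L$ spanning a subspace $\mathcal V\subseteq\G(L)$ with empty base locus (surjectivity of $\r$ forces this), and $\phi=\phi_{\mathcal V}$ is the associated map; by Remark \ref{rem:diff_rational_map} its differential is $\r_{\mathcal V}\circ\nabla s_{\mathcal V}$, which is an isomorphism because $\nabla$ is the flat model, so $\phi$ is a local diffeomorphism. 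Since geodesics of $\nabla$ are, by construction in the proof of Theorem \ref{thm:proj_ro}, the projections of integral curves of the fields $C(\xi)$, and these are straight lines in the trivialisation, $\phi$ carries geodesics into projective lines.

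For the converse, suppose such a local diffeomorphism $\phi$ from a covering space $M'\to M$ to $\C\!P^n$ carrying geodesics to projective lines. On $\C\!P^n$ the canonical $\r$-connection is flat and torsion free, satisfies (ii1)--(ii3), and (Remark \ref{rem:proj_ro}) has the projective lines as its geodesics, hence it is the $\r$-connection attached by Theorem \ref{thm:proj_ro} to the standard projective structure of $\C\!P^n$. Pulling back along the local diffeomorphism $\phi$ and using the naturality asserted in Theorem \ref{thm:proj_ro} (the correspondence between projective structures and $\r$-connections is compatible with local diffeomorphisms), the pullback $\phi^*\nabla_{\mathrm{can}}$ is the $\r$-connection of the pulled-back projective structure on $M'$; since $\phi$ maps geodesics to projective lines and conversely (a local diffeomorphism matching geodesics pulls the projective structure back to the given one), this pulled-back structure is the one descending from $M$, and its $\r$-connection is flat because curvature is local and $\nabla_{\mathrm{can}}$ is flat. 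Flatness of $\nabla$ on $M'$ then descends to flatness on $M$.

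The main obstacle I expect is the rigour of the trivialisation step in the forward direction: one must show that a flat torsion free $\r$-connection on a simply connected base, satisfying the Euler-field normalisation (ii1), is \emph{globally} isomorphic to the canonical $\r$-connection of $\C\!P^n$ --- equivalently, that the $(n+1)$ parallel sections of $E^*$ (or the $(n+1)$ sections of $L$) obtained from flatness really are everywhere linearly independent in the appropriate sense so that the resulting map to $\C\!P^n$ has no base points and is an immersion. This is where conditions (ii2) (which pins down the horizontal part, so that $\r\circ\nabla$ is the flat model and $\r s_{\mathcal V}$ has the right rank) and (ii1) (which identifies $\1$ with the Euler field, so the fibre direction is not lost) both get used; carrying this out amounts to unwinding the equivariant map $C$ of \eqref{e:principal_ro-connection} and checking it is the standard one for $\C\!P^n$, together with the holonomy-triviality argument on the simply connected cover.
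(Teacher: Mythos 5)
Your proof is correct and follows essentially the same route as the paper: on the simply connected cover, flatness yields an isomorphism of $E$ with the flat model preserving the $\r$-connections, the nowhere-vanishing image of $\1$ gives a base-point-free subspace of $\G(L)$ and hence the developing map $\phi$, whose differential is controlled by Remark~\ref{rem:diff_rational_map}; the converse is obtained by pulling back the canonical $\r$-connection of $\C\!P^n$ and invoking the uniqueness in Theorem~\ref{thm:proj_ro}. One small correction: the parallel trivialisation should read $E\cong L\otimes V$, with the tensor product of the canonical $\r$-connection of $L$ and the flat connection on $M\times V$ (equivalently, a reduction of the frame bundle of $E$ to $\C\!\setminus\{0\}$), rather than $E\cong\widetilde M\times V$ with a flat classical connection --- indeed $E$ is not trivial even in the model case $\C\!P^n$, and it is the former identification that your subsequent argument (the $\1$ direction becoming the Euler field, the components of $\a(\1)$ being sections of $L$) actually uses.
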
 
\begin{proof} 
Assume, for simplicity, $M$ simply-connected. Also, by Remark \ref{rem:proj_ro}(1)\,, we may suppose that 
$E=\frac{T(L^*\setminus0)}{\C\!\setminus\{0\}}$ so that $\nabla_{\1}s=s$, for any local section $s$ of $E$. 
Then, on denoting by $V$ the typical fibre of $E$, we have that $\nabla$ is flat if and only if $L\setminus0$ is a reduction to $\C\!\setminus\{0\}$ 
of the frame bundle of $E$, where $\C\!\setminus\{0\}\subseteq{\rm GL}(V)$ through $\l\mapsto\l\,{\rm Id}_V$. 
Equivalently, $\nabla$ is flat if and only if there exists an isomorphism of vector bundles $\a:E\to L\otimes V$, preserving the $\r$-connections.  
In particular, if we define $s=\a(\1)$ then $s$ is a section of $L\otimes V$ which is nowhere zero; note, also, that $\nabla s=\a$\,. 
Therefore $s$ induces a section of $P(L\otimes V)=M\times PV$ given by $x\mapsto\bigl(x,\phi(x)\bigr)$\,, for any $x\in M$, 
for some map $\phi:M\to PV$. Moreover, $\phi$ is as required, as, by Remark \ref{rem:diff_rational_map}\,, its differential is induced by $\a$\,.  
The proof is complete.   
\end{proof} 

\indent 
Recall that a rational curve on a manifold $M$ is a nonconstant map from the projective line to $M$. 
Also, $\ol(n)$ denotes the line bundle of Chern number $n\in\mathbb{Z}$ over the projective line. 

\begin{cor} \label{cor:proj_ro_app} 
Let $M$ be a manifold endowed with a projective structure and a smooth rational curve $t\subseteq M$ with normal bundle $k\ol(1)\oplus(n-k-1)\ol$\,, 
where $\dim M=n\geq2$ and $k\in\{1,\ldots,n-1\}$\,.\\ 
\indent 
Then $t$ is a geodesic, $k=n-1$ and the projective structure of $M$ is flat. 
\end{cor}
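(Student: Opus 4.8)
The plan is to first show that $t$ is a geodesic of the projective structure. Consider the natural filtration $TM|_t \supseteq T t$ with quotient the normal bundle $N = k\,\ol(1)\oplus(n-k-1)\,\ol$. Pull the $\r$-connection $\nabla$ back along $t$; since $t\cong \C\!P^1$ we may, after passing to the relevant line bundle, trivialize the ambient data enough to talk about the second fundamental form of $t$. The key computation is to show the second fundamental form of $t$ — namely the $N$-valued quadratic form obtained from $\nabla$ restricted to sections over $t$ tangent to $t$ — must vanish. The point is that $Tt = \ol(2)$, and a nonzero component of the second fundamental form would give a nonzero bundle map $\odot^2 \ol(2) = \ol(4) \to N$; but $N$ has summands $\ol(1)$ and $\ol$, and there are no nonzero maps $\ol(4)\to\ol(1)$ or $\ol(4)\to\ol$. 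Hence the second fundamental form is zero, which (as in Remark \ref{rem:proj_ro}(1), where projective subspaces were characterized by vanishing second fundamental form) says exactly that $t$ is a geodesic: locally there is a section $s$ of $E$ over $t$ with $\r\circ s = \dot t$ and $\nabla_s s = 0$.

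Next I would pin down $k$. Restrict the curvature form $R$ of $\nabla$ to $t$ and pair it against the tangent direction. Using \eqref{e:proj_curvature}, $R$ on any chart $U$ meeting $t$ equals $c_U(\r^*W^U) + \tfrac{n+1}{n-1}(\r^*C^U)\,\1$, with $W^U$ and $C^U$ the projective Weyl and Cotton–York tensors. Along a geodesic $t$ the operator $Z\mapsto W^U(\dot t, Z)\dot t$ (the "projective Jacobi operator") governs the behaviour of nearby geodesics, and the Cotton–York part enters the Jacobi-type equation for $N$-valued fields. The normal bundle of a geodesic in a projective structure is constrained: the space of Jacobi fields along $t$ modulo those coming from reparametrization/the ambient projective lines should match $H^0$ of the normal bundle. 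A free rational curve of the stated split type with a summand $\ol$ is too rigid unless that summand is absent; comparing $h^0(N)=2k+(n-k-1)=k+n-1$ against the dimension predicted by the flat model (where every geodesic of $\C\!P^n$ is a projective line with normal bundle $(n-1)\ol(1)$, giving $h^0 = 2(n-1)$) forces $k = n-1$, i.e. $N = (n-1)\,\ol(1)$.

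With $t$ a free rational curve whose normal bundle is $(n-1)\,\ol(1)$ — the normal bundle of a line in $\C\!P^n$ — the last step is to deduce flatness of $\nabla$. Here I would invoke the deformation theory of $t$: the family of deformations of $t$ in $M$ is a smooth $2n$-dimensional manifold (by $h^0(N) = 2(n-1)$ for the deformations through a point it is $2(n-1)$, and $h^1(N)=0$ gives unobstructedness), and through each point of $M$ in a suitable direction there passes a unique such curve, which by the first step is a geodesic. So the geodesics of the projective structure near $t$ are exactly these rational curves, and one builds from the $(2n)$-dimensional family a local identification with the family of lines in $\C\!P^n$; concretely, the evaluation/incidence construction produces a local diffeomorphism $M \to \C\!P^n$ carrying geodesics to projective lines, and then Corollary \ref{cor:proj_ro_flat} gives that $\nabla$ is flat (the hypothesis $L^{n+1} = \Lambda^n(TM)$ being available locally, which suffices as flatness is a local condition).

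The main obstacle I expect is the middle step — extracting $k = n-1$ and more generally controlling the normal bundle from the curvature. Showing the second fundamental form vanishes is a clean cohomological vanishing, and once one has a free line with the correct normal bundle the twistor/deformation machinery for flatness is standard; but ruling out the genuinely split cases $1\le k \le n-2$ requires carefully relating the $\ol$-summands of $N$ to degeneracies of the Jacobi operator along $t$, i.e.\ to the Weyl and Cotton–York tensors via \eqref{e:proj_curvature}, and arguing that such degeneracy propagates so as to be incompatible with $t$ being an honest immersed rational curve with the prescribed splitting.
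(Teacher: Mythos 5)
Your first step is essentially the paper's argument and is fine: the second fundamental form of $t$ with respect to the $\r$-connection lives in a bundle all of whose summands have negative degree (the paper phrases it as a section of $\bigl(2\ol(1)\bigr)^*\otimes\bigl(2\ol(1)\bigr)^*\otimes\bigl(k\ol(1)\oplus(n-k-1)\ol\bigr)$, using the tangent part $2\ol(1)\subseteq E|_t$; your version on $\odot^2\ol(2)$ works too, once you check the form kills $\1$, which follows from (ii1)), so it vanishes and $t$ is a geodesic. The two remaining steps, however, have genuine gaps. For $k=n-1$, your ``comparison with the flat model'' and the appeal to a Jacobi operator is not an argument --- it presupposes that the geodesic deformation theory of $t$ matches that of a line in $\C\!P^n$, which is close to what is being proved, and you yourself flag it as the main obstacle. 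The actual mechanism is a dimension count in the other direction: after invoking Kodaira to place $t$ in a locally complete $(n+k-1)$-dimensional family (note $h^0(N)=n+k-1$, not $2n$), one observes that every geodesic with initial conditions near those of $t$ is itself a compact rational curve (the lift of $t$ to $P(TM)$ is a compact \emph{simply connected} leaf of the geodesic foliation, so nearby leaves are compact), hence a member of the locally complete family; since germs of geodesics form a $(2n-2)$-dimensional space, $n+k-1\geq 2n-2$, i.e.\ $k\geq n-1$.

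The flatness step is where your route really breaks down. Asserting that ``the evaluation/incidence construction produces a local diffeomorphism $M\to\C\!P^n$ carrying geodesics to projective lines'' is essentially assuming the conclusion: the mere existence of a covering family of rational curves with normal bundle $(n-1)\ol(1)$ does not identify $M$ locally with $\C\!P^n$, and making the incidence construction work is exactly the hard twistor-type direction that one normally obtains \emph{from} curvature vanishing, not the other way around. The paper's argument is a second, one-line cohomological vanishing: from the exact sequence $0\to\ol\to E|_t\to\ol(2)\oplus(n-1)\ol(1)\to0$, whose extension class is the nonzero Chern number $k+2$ of $\det(TM)|_t$, one gets $E|_t=(n+1)\ol(1)$; the curvature $R$ is a section of $E\otimes\bigotimes^3E^*$, which restricts to each curve of the family as a section of $(n+1)^4\ol(-2)$ and therefore vanishes, and since the curves cover a neighbourhood, $R=0$. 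You never compute $E|_t$, and without it (or some substitute) neither the direct curvature argument nor a rigorous version of your incidence construction is available.
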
      
\begin{proof} 
By using \cite{Kod}\,, and by passing to an open neighbourhood of $t$\,, if necessary, we may assume that $M$ is covered with a locally complete 
$(n+k-1)$-dimensional family of smooth rational curves each of which has normal bundle $k\ol(1)\oplus(n-k-1)\ol$.\\  
\indent  
Denote $E=\frac{T(\det(TM))}{\C\!\setminus\{0\}}$ and let $\r:E\to TM$ be the projection. We have an exact sequence 
$$0\longrightarrow\ol\longrightarrow E|_t\overset{\r|_t}{\longrightarrow}\ol(2)\oplus k\ol(1)\oplus(n-k-1)\ol\longrightarrow0\;.$$ 
This exact sequence corresponds to $k+2\in\C\!=H^1\bigl(t\,,\ol(-2)\oplus k\ol(-1)\oplus(n-k-1)\ol\bigr)$ (the Chern number of $\ol(k+2)=\det(TM)|_t$\,), 
and, consequently, we must have $E|_t=(k+2)\ol(1)\oplus(n-k-1)\ol$\,.\\ 
\indent 
Let $\nabla$ be the $\r$-connection on $E$ giving the projective structure of $M$. The second fundamental form of $t$\,, with respect to $\nabla$,  
is a section of $$\bigl(2\ol(1)\bigr)^*\otimes\bigl(2\ol(1)\bigr)^*\otimes\bigl(k\ol(1)\oplus(n-k-1)\ol\bigr)=4k\ol(-1)\oplus4(n-k-1)\ol(-2)$$ 
and therefore it is zero. Thus, $t$ is a geodesic and it follows that $k=n-1$ (as the space of rational geodesics has dimension $2n-2$).\\ 
\indent 
Let $R$ be the curvature form of $\nabla$ and note that we can see it as a section of $E\otimes\bigotimes^3E^*$. 
Then the restriction of $R$ to any smooth rational curve, with normal bundle $(n-1)\ol(1)$\,, 
is a section of $(n+1)\ol(1)\otimes\bigotimes^3\bigl((n+1)\ol(-1)\bigr)=(n+1)^4\ol(-2)$ and therefore it is zero. 
Consequently, $R=0$ and the proof is complete. 
\end{proof} 

\indent 
The first application of Corrolaries \ref{cor:proj_ro_flat} and \ref{cor:proj_ro_app} is that if the twistor space of a quaternionic manifold $P$ 
is endowed with a complex projective structure then $P$ can be locally identified, through quaternionic diffeomorphisms, with the quaternionic projective space.\\ 
\indent 
Also (compare \cite{Bel-null_geods}\,), any projective structure that admits a rational geodesic must be flat.\\ 
\indent 
Finally, as any Fano manifold is compact simply-connected and admits rational curves as in Corollary \ref{cor:proj_ro_app} (see \cite{Paltin-2005} 
and the references therein) from Corrolary \ref{cor:proj_ro_flat} we obtain the following fact \cite{BisMcK-holo_Cartan}\,: 
the projective space is the only Fano manifold which admits a projective structure 
(compare \cite[(5.3)]{KobOch-1980}\,, \cite{HwaMok-1997}\,, \cite{Paltin-2005}\,).


\begin{thebibliography}{10} 

\bibitem{Arm-proj_II} 
S.~Armstrong, Projective holonomy. II. Cones and complete classifications, 
\textit{Ann. Global Anal. Geom.}, {\bf 33} (2008) 137--160.

\bibitem{At-57} 
M.~F.~Atiyah, Complex analytic connections in fibre bundles, 
\textit{Trans. Amer. Math. Soc.}, {\bf 85} (1957) 181--207. 

\bibitem{BisMcK-holo_Cartan} 
I.~Biswas, B.~McKay, Holomorphic Cartan geometries and rational curves, 
Preprint, (available from \href{http://arxiv.org/abs/1005.1472}{\tt http://arxiv.org/abs/1005.1472}). 

\bibitem{CraSau-proj} 
M.~Crampin, D.~J.~Saunders, Projective connections, 
\textit{J. Geom. Phys.}, {\bf 57} (2007) 691--727.

\bibitem{Bel-null_geods} 
F.~A.~Belgun, Null-geodesics in complex conformal manifolds and the LeBrun correspondence, 
\textit{J. Reine Angew. Math.}, {\bf 536} (2001) 43--63. 

\bibitem{HwaMok-1997} 
J.-M.~Hwang, N.~Mok, Uniruled projective manifolds with irreducible reductive $G$-structures, 
\textit{J. Reine Angew. Math.}, {\bf 490} (1997) 55--64. 

\bibitem{KobOch-1980} 
S.~Kobayashi, T.~Ochiai, Holomorphic projective structures on compact complex surfaces, 
\textit{Math. Ann.}, {\bf 249} (1980) 75--94. 

\bibitem{Kod}
K.~Kodaira, A theorem of completeness of characteristic systems for analytic families of
compact submanifolds of complex manifolds,
\textit{Ann. of Math. (2)}, {\bf 75} (1962) 146--162. 

\bibitem{MolMor-1996} 
R.~Molzon, K.~P.~Mortensen, The Schwarzian derivative for maps between manifolds with complex projective connections, 
\textit{Trans. Amer. Math. Soc.}, {\bf 348} (1996) 3015--3036.

\bibitem{Paltin-2005}
P.~Ionescu, Birational geometry of rationally connected manifolds via quasi-lines,
\textit{Projective varieties with unexpected properties}, 317--335, Walter de Gruyter GmbH \& Co. KG, Berlin, 2005.

\bibitem{Pan-q_integrab} 
R.~Pantilie, On the integrability of (co-)CR quaternionic manifolds, 
\textit{New York J. Math.}, {\bf 22} (2016) 1--20. 

\bibitem{Pan-qgfs} 
R.~Pantilie, On the embeddings of the Riemann sphere with nonnegative normal bundles, Preprint IMAR, Bucharest 2013, 
(available from \href{http://arxiv.org/abs/1307.1993}{\tt http://arxiv.org/abs/1307.1993}). 

\bibitem{Rob-proj} 
C.~W.~Roberts, The projective connections of T. Y. Thomas and J. H. C. Whitehead applied to invariant connections, 
\textit{Differential Geom. Appl.}, {\bf 5} (1995) 237--255. 

\bibitem{Tho-1926} 
T.~Y.~Thomas, A projective theory of affinely connected manifolds, 
\textit{Math. Z.}, {\bf 25} (1926) 723--733.

\end{thebibliography}
\end{document}